\numberwithin{equation}{section}
\numberwithin{table}{section}
\newcommand{\overbar}[1]{\mkern 1.5mu\overline{\mkern-1.5mu#1\mkern-1.5mu}\mkern 1.5mu}
\newcommand{\mynewtheorem}[4]{
  \if\relax\detokenize{#3}\relax 
    \if\relax\detokenize{#4}\relax 
      \newtheorem{#1}{#2}
    \else
      \newtheorem{#1}{#2}[#4]
    \fi
  \else
    \newaliascnt{#1}{#3}
    \newtheorem{#1}[#1]{#2}
    \aliascntresetthe{#1}
  \fi
  \expandafter\def\csname #1autorefname\endcsname{#2}
}
\def\equationautorefname~#1\null{(#1)\null}
\theoremstyle{plain}
\theoremstyle{definition}
\newtheorem{exampleth}[thm]{Example}
\newenvironment{example}{\begin{exampleth}}{\hfill $\diamond$\\ \end{exampleth}}
\DeclareMathOperator{\Spec}{\operatorname{Spec}}
\DeclareMathOperator{\conv}{\operatorname{conv}}
\DeclareMathOperator{\GL}{\operatorname{GL}}
\DeclareMathOperator{\trop}{trop}
\DeclareMathOperator{\cl}{cl}
\DeclareMathOperator{\cone}{cone}
\DeclareMathOperator{\supp}{supp}
\newcommand{\cA}{{\mathcal A}}
\newcommand{\F}{\mathbb{F}}
\newcommand{\an}{{\mathrm{an}}}
\newcommand{\G}{\mathbb{G}}
\newcommand{\Bfrak}{\mathfrak{B}}
\newcommand{\ov}{\overline}
\title{On endomorphisms of Arrangement Complements}
\author{\c{S}evda Kurul and Annette Werner}
\address{Institut f\"ur Mathematik,
Goethe-Universit\"at Frankfurt, Robert-Mayer-Str. 6-8, 60325 Frankfurt am Main,Germany}
\email{\{kurul\},\{werner\}@math.uni-frankfurt.de}
\date{\today}                                        
\keywords{}
\subjclass[2010]{}
\begin{document}
\begin{abstract}
Let $\Omega$ be the complement of a connected, essential  hyperplane arrangement. We prove that every dominant endomorphism of $\Omega$ extends to an endomorphism of the tropical compactification $X$ of $\Omega$ associated to the Bergman fan structure  on the tropical variety $\trop(\Omega)$. This generalizes a result in \cite{rtw}, which states that every automorphism of Drinfeld's half-space over a finite field $\mathbb{F}_q$ extends to an automorphism of the successive blow-up of projective space at all $\mathbb{F}_q$-rational linear subspaces. This successive blow-up is in fact the minimal wonderful compactification by de Concini and Procesi, which coincides with $X$ by results of Feichtner and Sturmfels \cite{fs}. Whereas the proof in \cite{rtw} is based on Berkovich analytic geometry over the trivially valued finite ground field, the  generalization proved in the present paper relies on matroids and tropical geometry. 
\end{abstract}

\maketitle
\small
\centerline{\bf MSC(2010): 14T05, 52C35}
\normalsize

\section{Introduction}\label{sec:intro}

Let $\cA$ be a connected, essential arrangement of hyperplanes over an arbitrary field $K$, and let $\Omega_\cA$ be the complement of the arrangement $\cA$ in projective space.  By  $X_{vc}(\cA)$ we denote the visible contour compactification of $\Omega_\cA$. It is associated to the Bergman fan structure on the tropicalization $\trop(\Omega_\cA)$ in the sense of Tevelev \cite{Tev}. Our main result \autoref{mainthm} states that every dominant endomorphism of $\Omega_\cA$ extends to an endomorphism of its visible contour compactification $X_{vc}(\cA).$ As a corollary we show in \autoref{finite} that every dominant endomorphism of $\Omega_\cA$ is finite. Feichtner and Sturmfels \cite{fs} have provided conditions under which  the visible contour compactification coincides with the minimal wonderful compactification of $\Omega_\cA$ defined by de Concini and Procesi.

This coincidence occurs for example if $K= \F_q$  is a finite field and $\cA$ is the full arrangement of all $\F_q$-rational hyperplanes in projective space. Then the complement $\Omega_\cA$ is Drinfeld's  half-space over $\F_q$.  It was shown in \cite[Theorem 1.1]{rtw} that for this arrangement every automorphism of $\Omega_\cA$ extends to an automorphism of the ambient projective space $\mathbb{P}_{\F_q}^d$, i.e. it is given by an element in $PGL(d, \F_q)$. An important  step in the proof  is the extension of an automorphism of $\Omega_\cA$ to an automorphism of the successive blow-up $X_{wnd}(\cA)$ of $\mathbb{P}^d_{\F_q}$ at all $\F_q$-rational linear subspaces, which is achieved by using Berkovich analytic geometry over the trivially valued field $\F_q$. In the present paper, see \autoref{cor:drinfeld}, we give an alternative proof of this step without using analytic geometry. Instead we use techniques from tropical geometry and matroid theory.
Our alternative approach can then be generalized to arbitrary essential and connected hyperplane arrangements over any ground field. 
To be more precise, the proof of \cite[Theorem 1.1]{rtw} relies on the  fact that every automorphism of $\Omega_\cA$ restricts to an automorphism of a suitable skeleton of $\Omega^{\an}_\cA$. In order to show that this restriction preserves the fan structure on the skeleton, it is proved in \cite[Lemma 2.2]{rtw} that distinct maximal cones in the skeleton span distinct linear spaces. In the present paper, we prove a tropical avatar of this result in Theorem \ref{thm:cones}, which states that for a loopfree matroid $M$ distinct maximal cones in the Bergman fan span distinct linear spaces. 

Note that the Drinfeld half-space is the only hyperplane complement over $\mathbb{F}_q$ with automorphism group $PGL(d,\mathbb{F}_q).$
Therefore we cannot expect that the second step in the proof of \cite[Theorem 1.1]{rtw}, which is a descent from $X_{wnd}(\cA)$ to projective space, can be generalized to other arrangements. Langer \cite{langer} shows in a recent paper that every separable dominant endomorphism of Drinfeld's half-space is in fact an automorphism, but that there exist inseparable endomorphisms which do not have this property and which therefore cannot be extended to automorphisms of the full projective space.

The outline of this paper is as follows. We start with some basic definitions and properties of hyperplane arrangements in \autoref{sec:background}, together with a brief account on the compactifications of arrangement complements introduced in \cite{Tev}, \cite{DP} and \cite{kapranov}. In \autoref{sec:matroidtheory} we introduce the necessary definitions from matroid theory, and we describe different fan structures on the tropical linear space of a matroid, ranging from the coarsest (the Bergman fan) to the finest (the fine subdivision). In \autoref{sec:Bergman} we prove that distinct cones of the Bergman fan span distinct linear spaces. Then we  prove in \autoref{sec:extend} that every dominant endomorphism of a connected arrangement complement extends to an endomorphism of its visible contour compactification and hence is finite. In particular, the automorphism group of the arrangement complement is a subgroup of the automorphism group of its visible contour compactification.

{\bf Acknowledgements:} We want to thank the referee for his or her remarks on this paper. We also thank Adrian Langer for asking if the results of \cite{rtw} may be generalized to endomorphisms and for his hospitality during a visit of the second author to Warsaw. We are grateful to Kristin Shaw for many useful discussions. We also thank Diane Maclagan for her very helpful comments at various stages of this project. Research on this paper was supported by DFG grant WE-4279/7.

\section{Hyperplane complements and compactifications}\label{sec:background}

\subsection*{Hyperplane complements} Fix any ground field $K$ and a vector space $V$ of dimension $d+1$ over $K$. A set  $\cA= \{H_0, \ldots, H_n\}$ of $n+1$ linear hyperplanes in $V$ is called a \emph{hyperplane arrangement} over $K$.  It is called \emph{essential}, if the intersection $\bigcap_{i=0}^{n} H_i = \{ 0 \}.$ 

\begin{example} One important family of essential hyperplane arrangements is given by the (essential) braid arrangements $A_{n}$ for $n>1$. Here we consider the hyperplanes $H_i = V(x_i)$ for $i \in \{ 0, \ldots, n-1\}$ and $H_{ij} = V(x_i - x_j)$ for $i,j \in \{0, \ldots ,n-1 \}$ and $i<j$ in $n$-space. This arrangement is in fact the quotient of the finite reflection arrangement associated to a type $A$ root system after dividing by the lineality space. The complement of $A_{n}$ in $\mathbb{P}^{n-1}_{\mathbb{C}}$ is isomorphic to the moduli space $M_{0,n+2}$ of $n+2$ pointed curves of genus $0.$ 
\end{example}

\subsection*{Tropicalizations} From now on we assume that  $\cA = \{H_0, \ldots, H_n\}$ is an essential arrangement of hyperplanes in $V$. We denote by $\Omega_\cA$ the complement $\mathbb{P}(V) - \cA$ endowed with the reduced induced structure, where $\mathbb{P}(V) = \mbox{Proj Sym} V^\ast$ is the projective space of lines in $V$.  Then $\Omega_\cA$ is an integral affine $K$-scheme. Furthermore, let  $T$  be the standard torus in $\mathbb{P}^n_K$ 
which is the complement of all coordinate hyperplanes.  Let $l_i$ be an element in the dual space $V^\ast$ such that $H_i$ is the kernel of $l_i$. 
Then the morphism  $$j: \Omega_\cA \to T, \quad x \mapsto [l_0(x): l_1(x): \ldots : l_n(x)]$$
is a closed immersion. 

Moreover, the  multiplicative group $\mathcal{O}(\Omega_\cA)^*/K^*$ is generated by the classes $\left[\frac{l_i}{l_0}\right]$ of elements $\frac{l_i}{l_0}$ of $\mathcal{O}(\Omega_\cA)^*,$ for $i$ in $\{ 1, \ldots, n\}$. Hence $T$ can be identified with the intrinsic torus of the very affine variety $\Omega_\cA$.
By $\trop(\Omega_\cA)$ we denote the associated tropicalization where the ground field $K$ is taken with the trivial absolute value, see  \cite[Section 4.1]{tropbook}.
We denote by $N$ the cocharacter group of $T$.

The hyperplane arrangement $\cA$  gives rise to a matroid $M_\cA$ on the ground set $\{ 0, \ldots, n \}$, whose independent sets correspond to the linear independent subsets of $\{ l_0, \ldots, l_n \}$ in $V^\ast$. A good introduction to matroids can be found in \cite{Ox}.

We call the arrangement $\cA$ \emph{connected} if its associated matroid $M_\cA$ is connected. This means that $\cA$ cannot be decomposed as a product of strictly smaller arrangements.
The lattice of flats of the matroid $\mathcal{L}(M_\cA)$ of $M_\cA$ is just the intersection lattice of $\cA$ and the rank function $r$ on $M_\cA$ is given by the codimension of the corresponding intersection. The lattice $\mathcal{L}(M_\cA)$ is partially ordered by reverse inclusion. In fact, $\mathcal{L}(M_\cA)$ is a geometric lattice with minimal element $\hat 0,$ which corresponds to the empty intersection, hence to the ambient space of the arrangement. 

In general, a loopfree matroid $M$ on a finite set $E(M) = \{ 0,1, \dotsc ,n \}$ gives rise to a tropicalization in the following way. Write $\mathbb{R}^{n+1} / \mathbb{R} \cdot \textbf{1}$ for the quotient space $\mathbb{R}^{n+1}  / \mathbb{R} \cdot (1, \ldots, 1).$ Then
the \emph{tropical linear space} $\trop(M)$  of a loopfree matroid M is the set of vectors $v = (v_0, v_1, \dotsc , v_n) \in \mathbb{R}^{n+1}$ such that, for every circuit $C$ of $M$, the minimum of the numbers $v_i$ is attained at least twice as $i$ ranges over $C$. If $v \in \trop(M)$ then $v + \lambda \cdot \textbf{1} \in \trop(M)$ for any $\lambda \in \mathbb{R}$, so we regard it as a subset of $\mathbb{R}^{n+1}/\mathbb{R} \cdot \textbf{1}$.

Note that for any essential arrangement of hyperplanes $\cA$, the tropicalization $\trop(\Omega_\cA)$ which we defined previously coincides with the tropical linear space $\trop(M_\cA)$ by \cite[Proposition 4.1.6]{tropbook}, since the ideal of $j(\Omega_\cA)$   as a subvariety of $T$ is generated by a system of linear equations given by the  circuits of the matroid $M_\cA.$ In fact, for a linear form $l = \sum a_ix_i \in I$ we define its support by $\supp(l) = \{i: a_i \neq 0 \}.$ Then the linear forms $l_C$ of $I,$ such that $\supp(l_C)$ is a circuit of $M_\cA$, form a tropical basis.

\begin{example} For the braid arrangement $A_3$ in $\mathbb{P}^2_K$ the set of circuits of $M_{A_3}$ is just the index sets of the minimal dependent sets of column vectors of the matrix \[ B = \begin{pmatrix}
 1 & 0 & 0 & 1 & 1 & 0\\
 0 & 1 & 0 & -1 & 0 & 1\\
 0 & 0 & 1 & 0 & -1 & -1
\end{pmatrix}.\] If we index the columns of $B$ from $0$ to $5$ then the set of circuits of $M_{A_3}$ is given by $$ \{ \{0,1,3\}, \{0,2,4\}, \{1,2,5\}, \{3,4,5\},  \{0,1,4,5\},\{0,2,3,5\},\{1,2,3,4\}\}.$$ The complement $\mathbb{P}^2_K - A_{3}$ is identified with the very affine variety $V(I)$ in the torus $\G_m^6/\G_m$ given by the ideal $$I=(x_0 - x_1 -x_3, x_0 - x_2 -x_4, x_1 - x_2 - x_5, x_4 -x_3 -x_5)$$ in $K[x_0^{\pm}, \ldots, x_5^{\pm}].$
\end{example}

\subsection*{Compactifications} Let $K$ be any field, and let $T$ be a split torus over $K$ with cocharacter group $N$. 
For every fan $\Sigma$ in $N_{\mathbb{R}} = N \otimes_{\mathbb{Z}} \mathbb{R},$ we denote by $Y_{\Sigma}$ the normal toric $K$-variety associated to $\Sigma$ with dense torus $T$. 

Let us recall some results by Tevelev over an algebraically closed ground field $K$. In \cite[Proposition 2.3]{Tev} Tevelev shows that in this case the closure $\overline{\Omega}_\cA$ of $\Omega_\cA$ in the (not necessarily complete) toric variety $Y_\Sigma$ is complete if and only if the support of $\Sigma$ contains $\trop(\Omega_\cA)$. In particular every choice of a fan structure on $\trop(\Omega_\cA)$ gives rise to a compactification of the arrangement complement $\Omega_\cA,$ even if the toric variety itself is not complete. 

If $K$ is an arbitrary ground field with algebraic closure $\overline{K}$, the complement $\mathbb{P}_{\overbar{K}} - \cA$, where $\cA$ is regarded as an arrangement in $\mathbb{P}_{\overbar{K}}$, is the base change $\Omega_{\cA} \otimes_K \overbar{K} = \Omega_{\cA} \times_{\Spec(K)} \Spec(\overbar{K}).$  The tropicalizations $\trop(\Omega_\cA)$ and $\trop(\Omega_\cA \otimes_K \overline{K})$ coincide. Note that for any fan structure $\Sigma$ on $\trop(\Omega_{\cA})$, the base change of the closure $ \overline{\Omega}_{\cA}$ of $\Omega_\cA$ in the toric $K$-variety $Y_{\Sigma}$ coincides with the closure of $\Omega_{\cA} \otimes_K \overline{K}$ in $Y_{\Sigma} \otimes_K \overbar{K}$ which is proper. Hence  by faithfully flat descent, $\overline{\Omega}_{\cA}$ is also proper over $K$. 

Compactifications of subvarieties of tori obtained in this way are called \emph{tropical compactifications}, if the multiplication map $\mu: T \times \overline{\Omega}_{\cA} \to Y_{\Sigma}$ is faithfully flat. A subvariety of a torus is called \emph{sch\"{o}n} if the multiplication map for one (hence for any \cite[Theorem 1.4]{Tev}) tropical compactification is smooth. By \cite[Theorem 1.5]{Tev} $\Omega_{\cA}$ is sch\"{o}n for any connected, essential arrangement $\cA.$

Note that in general there is no canonical fan structure on the tropicalization of a very affine variety, so that there are different natural tropical compactifications.
Here we are mainly interested in the following two compactifications. The first one is obtained by taking the Bergman fan $\Bfrak(M_{\cA})$ on the tropicalization $\trop(\Omega_{\cA}).$ We denote the closure $\overline{\Omega}_{\cA} \subset Y_{\Bfrak(M_{\cA})}$ by $X_{vc}(\cA)$ and, following \cite{Tev}, we call it the \emph{visible contour compactification}. Over an algebraically closed field $K$ this is the visible contour compactification investigated by Kapranov \cite{kapranov}. Since $X_{vc}(\cA)$ is a tropical compactification, $X_{vc}(\cA)$ is smooth if and only if the compactifying toric variety $Y_{\Bfrak(M_{\cA})}$ is smooth. The Bergman fan $\Bfrak(M_{\cA})$ is not necessarily simplicial, therefore $X_{vc}(\cA)$ is not smooth in general. A family of examples can be found in \cite{DD}.

The second compactification of interest here is constructed by taking the minimal nested set fan $\Sigma_{min}(M_{\cA})$ as fan structure supported on $\trop(\Omega_{\cA}).$ We write $X_{wnd}(\cA)$ for the closure $\overline{\Omega}_{\cA} \subset Y_{\Sigma_{min}(M_{\cA})}$ and call it the \emph{wonderful compactification} of $\Omega_{\cA}.$ Feichtner and Sturmfels have shown in \cite{fs} that over an algebraically closed field $X_{wnd}(\cA)$ coincides with the minimal wonderful model of the arrangement complement introduced by de Concini and Procesi in \cite{DP}. The compactification $X_{wnd}(\cA)$ can also be obtained by iteratively blowing up the ambient projective space of $\cA$ along strict transforms of linear subspaces in increasing order of dimension. The boundary $X_{wnd}(\cA) \backslash \Omega_\cA$ is a divisor with normal crossings whose irreducible components are indexed by the elements of the so-called building set. A subset of boundary components intersect if and only if the corresponding subset of the building set forms a nested set. In \autoref{sec:matroidtheory} we give a formal definition of building sets, nested sets as well as of the fans $\Bfrak(M_{\cA})$ and $\Sigma_{min}(M_{\cA}).$

\section{Matroids and fan structures}\label{sec:matroidtheory}
Let us begin by recalling some facts on matroids. Let $M$ be a matroid on the finite set $E(M)$. We denote by $\mathcal{B}(M)$ the set of bases, by $\mathcal{C}(M)$ the set of circuits and by $\cl_M$ its closure operator.

\subsection*{Restriction and contraction of matroids}
For a subset $X \subset E(M)$ we define the \emph{restriction} of $M$ to $X$ as the matroid $M \vert_X$ on the ground set $X$ for which a subset of $X$ is independent if and only if it is independent in the original matroid $M.$  A subset $F$ of $X$ is a flat of $M \vert_X$ if and only if there is a flat $\widetilde{F}$ of $M$ such that $F = \widetilde{F} \cap X.$ 

 We define the \emph{contraction} of $M$ to $E(M) \backslash X$ as the matroid $M/X$ on the ground set $E(M) \backslash X$ whose independent sets are subsets $I$ of $E(M) \backslash X$ such that for some (equivalently, for any) basis $B_X$ of the restriction $M\vert_X$ the set  $I \cup B_X$ is an independent set of $M.$ A subset $F$ of $E(M) \backslash X$ is a flat of $M/X$ if and only if $F \cup X$ is a flat of $M.$

\begin{example} Let $\mathcal{A}$ be an essential arrangement of $n+1$ hyperplanes in a vector space $V,$ let $M(\mathcal{A})$ its associated matroid on $\{ 0,1, \ldots, n\}$ and $F$ a flat of $M(\mathcal{A}).$ Moreover let $L_F$ be the linear space $\bigcap_{i \in F} H_i$ associated to $F.$ We define 
\[\begin{array}{lll}
\mathcal{A}_F & = & \{ H_i \in \mathcal{A}: L_F \subset H_i \} =\{ H_i \in \mathcal{A}: i \in F \} \\
\mathcal{A}^F & = & \{ H_i \cap L_F: i \notin F \}.\\
\end{array}\]
Then $\mathcal{A}_F$ is an arrangement in $V$ such that $M(\mathcal{A}_F) = M \vert_F$. Moreover, if $M/F$ is simple, then the hyperplanes $H_i \cap L_F$ for $i \notin F$ are pairwise distinct, so that 
$\mathcal{A}^F$ is an arrangement of hyperplanes in $L_F$ satisfying $M(\mathcal{A}^F) = M/F$. \end{example}


\subsection*{Building sets and nested sets} Let $\mathcal{L}(M)$ be the lattice of flats of $M$ with unique minimal element $\hat 0.$ A subset $G \subset \mathcal{L}(M) \backslash \{ \hat 0 \}$ is called a \emph{building set} if, for every $F \in \mathcal{L}(M),$ we have an order-isomorphism $$\left[ \hat 0, F\right]\simeq \prod_{X \in \max(G \cap [\hat 0, F])} \left[ \hat 0, X\right],$$ where, for a set $H \subset \mathcal{L}(M),$ the notation $\max H $ denotes the set of maximal elements.

\begin{example} The set $G_{\min} = \{ F \in \mathcal{L}(M): M \vert_F \textit{ is connected } \}$ is the unique minimal building set, while $G_{\max} = \mathcal{L}(M) \backslash \{\hat 0\}$ is the unique maximal building set of $\mathcal{L}(M).$
\end{example}

For a building set $G \subset \mathcal{L}(M) \backslash \{\hat 0\}$ a subset $S \subset G$ is called a \emph{nested set} for $G$ if for pairwise incomparable elements $S_1, \ldots, S_k,$ and $k \geq 2,$ the closure $\cl(S_1 \cup \cdots \cup S_k) \notin G.$ We denote by $\mathcal{N}(G)$ the set of all nested sets for $G.$

\begin{example} For the maximal building set $G_{\max},$ a subset $S \subset G_{\max}$ is nested if and only if $S$ is a chain of flats in $\mathcal{L}(M) \backslash \{\hat 0\}.$ If the building set is minimal, then a nested set can contain incomparable elements.
\end{example}

\subsection*{Fan structures on $\trop(M)$} For a loopfree matroid $M$ there are several natural polyhedral fan structures on the tropical linear space $\trop(M).$ In \cite{fs}, Feichtner and Sturmfels compare these fan structures, from the coarsest, called the Bergman fan, to the finest, called the fine subdivision of $\trop(M)$. In the following we will define these two fans. \\

Let $M$ be a loopfree matroid on $E(M) = \{0, \ldots, n\}$ and $\{e_0, e_1, \ldots, e_n\}$ be the standard basis of $\mathbb{Z}^{n+1}.$ For $F \subset E(M) = \{0, \ldots, n\}$ let $e_F = \sum_{i \in F} e_i.$ Then we define the \emph{matroid polytope} $P_M$ as $$P_M = \conv(e_B: B \in \mathcal{B}(M)) \subset \mathbb{R}^{n+1},$$ the convex hull of the incidence vectors on the bases of $M.$ Feichtner and Sturmfels prove in \cite[Proposition 2.4]{fs} that the dimension of the matroid polytope $P_M$ equals $n+1 - \kappa(M),$ where $\kappa(M)$ denotes the number of connected components of $M.$ In particular, if $M$ is connected, then $P_M$ has dimension $n.$ Faces of the matroid polytope are themselves matroid polytopes. In fact, if $S$ is a face of $P_M,$ then we define the \emph{degeneration matroid} $M_S$ as the matroid on the ground set $E(M)$ such that $I \subset E(M)$ is an independent set of $M_S$ if and only if there exists a vertex $e_B$ of the face $S$ with $I \subset B.$ Then $S$ coincides with the matroid polytope $P_{M_S}.$

Let  $\mathcal{G}(M)$ be the outer normal fan of the matroid polytope $P_M.$ There is an equivalence relation on vectors in $\mathbb{R}^{n+1}/\mathbb{R}\cdot \textbf{1}$, where $u \sim v$ if and only if $u$ and $v$ achieve their maximum value on the same face of $P_M.$ The equivalence classes form the relative interiors of convex polyhedral cones. For a face $S$ of $P_M,$ we will denote by $\sigma_S$ the cone in $\mathbb{R}^{n+1}/\mathbb{R}\cdot \textbf{1}$ obtained by taking the closure of the equivalence class of vectors attaining their maximum value on the face $S$. 

Finally, the \emph{Bergman fan} $\mathfrak{B}(M)$ is the subfan of the projection of  $\mathcal{G}(M)$ to $\mathbb{R}^{n+1}/\mathbb{R}\cdot \textbf{1}$ consisting of those cones $\sigma_S$ for which the degeneration matroid $M_S$ is loopfree, i.e. the union of its bases is the complete ground set. The support $\vert \mathfrak{B}(M) \vert $ of the Bergman fan is the tropical linear space $\trop(M),$ by \cite[Corollary 4.2.11]{tropbook}. 
If $M$ is connected, then $\trop(M)$ has zero-dimensional lineality space in $\mathbb{R}^{n+1}/\mathbb{R}\cdot \textbf{1}$ by \cite[Lemma 2.3]{FR}.\\

For an arbitrary matroid $M$ on $E(M) = \{0, \ldots, n\}$ and a fixed building set $G \subset \mathcal{L}(M) \backslash \{\hat 0\}$ we define a rational polyhedral fan $\Sigma_G(M)$ as follows. Let $\{e_0, e_1, \ldots, e_n\}$ be the standard basis of $\mathbb{R}^{n+1}.$ For $F \in \mathcal{L}(M)$ we set $v_F= \sum_{i \in F} e_i$ and for each $S \in \mathcal{N}(G)$ we define a cone $\sigma_S = \cone(v_F: F \in S) + \mathbb{R}\cdot \textbf{1}$ in $\mathbb{R}^{n+1}/\mathbb{R} \cdot \textbf{1}.$ Then the \emph{nested set fan} of $M$ with respect to $G$ is defined as $$\Sigma_G(M) = \{ \sigma_S: S \in \mathcal{N}(G)\}.$$ The fan $\Sigma_G(M)$ is a pure simplicial fan of dimension $r(M)-1$ in $\mathbb{R}^{n+1}/ \mathbb{R}\cdot \textbf{1}$ by \cite[Proposition 2]{fy}.
For a loopfree matroid $M$ the support of this fan equals the tropical linear space $\trop(M).$ If $G = G_{\max},$ the nested set fan $\Sigma_G(M)$ is also called the \emph{fine subdivision}. We will denote $\Sigma_G(M)$ in this case by $\Sigma(M).$ While for $G= G_{\min},$ we will write $\Sigma_{\min}(M)$ for the \emph{minimal nested set fan} $\Sigma_G(M).$

Let $F$ and $G$ be two flats of a connected matroid $M,$ such that $F \subset G.$ Then $(E(M) \backslash F) \cap G$ is a flat of the contraction matroid $M/F.$ We denote by $M[F,G]$ the matroid obtained by restricting the contraction $M/F$ to $(E(M) \backslash F)\cap G.$ Then $M[F,G]$ is a matroid of rank $r(G) - r(F).$ The following result due to Feichtner and Sturmfels gives a combinatorial criterion for the fans $\Bfrak(M)$ and $\Sigma_{min}(M)$ to coincide.

\begin{thm}\label{thm:fs}\cite[Proposition 5.3]{fs} For a loopfree matroid the minimal nested set fan $\Sigma_{min}(M)$ equals the Bergman fan $\Bfrak(M)$ if and only if the matroid $M[F,G]$ is connected for every pair of flats $F \subset G$ with $G$ connected.
\end{thm}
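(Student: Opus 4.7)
The plan is to compare the two fans on their common support $\trop(M)$ by analyzing how their maximal cones match. Both $\mathfrak{B}(M)$ and $\Sigma_{\min}(M)$ are polyhedral fans with support $\trop(M)$, so equality amounts to verifying that the two combinatorial descriptions of cones agree; this is the comparison I would carry out.

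Cones of $\mathfrak{B}(M)$ correspond to faces $T$ of the matroid polytope $P_M$ with loopfree degeneration matroid $M_T$, and each such face determines a flag of flats $\emptyset = F_0 \subsetneq F_1 \subsetneq \cdots \subsetneq F_k = E(M)$ together with the decomposition $M_T = \bigoplus_{i=1}^k M[F_{i-1}, F_i]$. The extreme rays of $\sigma_T$ are the vectors $v_{F'}$ for flats $F'$ of $M$ such that $T \subset T_{F'}$; combinatorially, $F'$ must be a union of ``pieces'' of the decomposition of $M_T$ that remains closed in $M$. On the other hand, cones of $\Sigma_{\min}(M)$ are simplicial cones $\cone(v_F : F \in S) + \mathbb{R}\mathbf{1}$ where $S$ is a nested set in the minimal building set $G_{\min} = \{F \in \mathcal{L}(M) \setminus \{\hat{0}\} : M|_F \text{ is connected}\}$.

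The theorem then reduces to the statement that every extreme ray of every Bergman cone corresponds to a connected flat of $M$, together with the appropriate nested-set structure. I would prove the two implications as follows. For $(\Rightarrow)$ I argue the contrapositive: given flats $F \subsetneq G$ with $G$ connected but $M[F,G] = N_1 \oplus N_2$ disconnected along $G\setminus F = A_1\sqcup A_2$, I construct a face $T$ of $P_M$ refining the flag through $F$ and $G$ so that $M_T$ decomposes into connected pieces. A union of pieces such as $F \cup A_1 \cup X$ for a connected piece $X$ sitting outside $G$ yields an extreme ray $v_{F\cup A_1 \cup X}$ of $\sigma_T$ whose flat is typically disconnected in $M$, placing $\sigma_T$ outside the family of $\Sigma_{\min}$ cones. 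For $(\Leftarrow)$ I argue that under the hypothesis every extreme-ray flat $F'$ of a Bergman cone is connected: the decomposition data of $M_T$ exhibits $F'$ as an iterated extension along pieces, and the hypothesis, applied repeatedly, forces each extension step to preserve connectedness.

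The main technical obstacle is the precise book-keeping connecting the decomposition structure of $M_T$ to the closure operation in $M$. One must determine exactly which unions of pieces of $M_T$ correspond to flats of $M$ and then apply the hypothesis to rule out the disconnected ones, in order to match the extreme rays of Bergman cones with connected flats and thus with nested sets in $G_{\min}$.
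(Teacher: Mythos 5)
A preliminary remark: the paper does not prove this statement at all --- it is quoted from Feichtner and Sturmfels \cite[Proposition 5.3]{fs} --- so there is no in-paper argument to compare yours against; I can only assess the sketch on its own terms.

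Your framing (compare the fans cone by cone, describing Bergman cones through faces $T$ of $P_M$ with the decomposition $M_T=\bigoplus_i M[F_{i-1},F_i]$, and nested-set cones through $G_{\min}$) is a reasonable starting point, but the reduction you build on it is false, and both of your implications lean on it. Since $\Sigma_{\min}(M)$ refines $\mathfrak{B}(M)$ and both are supported on $\trop(M)$, every ray of $\mathfrak{B}(M)$ is already a ray of $\Sigma_{\min}(M)$, hence of the form $v_{F}$ with $M|_{F}$ connected. (Equivalently: the extreme rays of a Bergman cone $\sigma_T$ are the normal rays of the facets of $P_M$ containing $T$; the coordinate facets are excluded because $M_T$ is loopfree, and the remaining facets are indexed by flats $F$ with both $M|_F$ and $M/F$ connected.) So ``every extreme ray of every Bergman cone corresponds to a connected flat'' holds unconditionally, independently of any hypothesis on the $M[F,G]$, and in particular the mechanism you propose for $(\Rightarrow)$ --- producing an extreme ray $v_{F\cup A_1\cup X}$ whose underlying flat is disconnected --- can never occur.

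The paper's own example immediately after the theorem shows what actually fails. Take lines in $\mathbb{P}^2$ with a line $L$ and points $a,b\in L$ such that every other line passes through $a$ or $b$, and let $F_a,F_b$ be the (connected) flats of lines through $a$, resp.\ $b$. Then $v_{F_a}+v_{F_b}=v_{E(M)}+v_{\{L\}}\equiv v_{\{L\}}$ in $\mathbb{R}^{n+1}/\mathbb{R}\cdot\mathbf{1}$, and the Bergman cone $\cone(v_{F_a},v_{F_b})$ is the union of the two nested-set cones $\cone(v_{\{L\}},v_{F_a})$ and $\cone(v_{\{L\}},v_{F_b})$. Every extreme ray of this Bergman cone is a connected flat, yet it is not a cone of $\Sigma_{\min}(M)$, because $F_a$ and $F_b$ are incomparable and $\cl(F_a\cup F_b)=E(M)\in G_{\min}$, so $\{F_a,F_b\}$ is not nested; the disconnected $M[\{L\},E(M)]=M/L$ is the witness predicted by the criterion. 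Hence the property your argument must actually establish is that the collection of flats spanning each Bergman cone forms a \emph{nested set} (and spans the cone simplicially), and the ``book-keeping'' you defer --- deciding which unions of pieces of $M_T$ are flats and how they sit in the lattice of flats --- is precisely where the hypothesis enters. As written, both directions rest on a criterion that is vacuously satisfied, so neither goes through.
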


\begin{example} 

\smallskip

\noindent (1) For an arrangement $\cA$ of $n$ lines in $\mathbb{P}^2,$ the Bergman fan and the nested set fan coincide in all cases but the following: There exists a line $L$ in $\cA$ and points $a,b \in L$ such that each remaining line passes through $a$ or $b.$ In this case $\Bfrak(M_{\cA}) \neq \Sigma_{min}(M_{\cA}).$ In fact, the wonderful compactification $X_{wnd}(\cA)$ is just the blow-up of $\mathbb{P}^2$ in the two points $a$ and $b,$ while $X_{vc}(\cA)$ is obtained by blowing-down the strict transform of $L$ in $X_{wnd}(\cA),$ hence $X_{vc}(\cA)$ is isomorphic to $\mathbb{P}^1 \times \mathbb{P}^1.$ 

\smallskip

\noindent (2) In \cite[Theorem 1.2]{arw} Ardila, Reiner and Williams prove that the Bergman fan and the minimal nested set fan coincide for finite root system arrangements. In particular, for a finite root system arrangement $\cA$ the compactifications $X_{wnd}(\cA)$ and $X_{vc}(\cA)$ coincide. Thus, we have $X_{vc}(A_n) = X_{wnd}(A_n)$ for the braid arrangement $A_n$ defined in Example 2.1.

\smallskip

\noindent (3) The Deligne-Mumford compactification $\overline{M}_{0,n}$ coincides with the minimal wonderful compactification of the complement of the complex braid arrangement $A_{n-2}$ by \cite[Section 4.3]{DP}.

\end{example}

\section{The span of cones in the Bergman fan}\label{sec:Bergman}

In this section we prove that distinct maximal cones in the Bergman fan of a matroid span distinct linear spaces. This will be useful in the proof of our main theorem.\\

Recall that every cone $\sigma_{\mathcal{F}}$ in the fine subdivision $\Sigma(M)$ is given by a chain of flats
	\[\mathcal{F}: \emptyset \subsetneq F_{1} \subsetneq \dotsb \subsetneq F_{m} \subsetneq F_{m+1} \subset E(M)\,.\]
We will decompose such a chain in the following way: Set $I_{\mathcal{F}}^1 = F_1$ and $I_{\mathcal{F}}^j = F_j \backslash F_{j-1}$ for $j$ in $\{2, \ldots, m+1\}$. Then we can rewrite the chain $\mathcal{F}$ as
	$$\mathcal{F}: \emptyset \subsetneq\ I_{\mathcal{F}}^1\  \subsetneq\  I_{\mathcal{F}}^1 \dot{\cup} I_{\mathcal{F}}^2 \ \subsetneq \ \cdots\  \subsetneq \ I_{\mathcal{F}}^1 \dot{\cup} I_{\mathcal{F}}^2 \dot{\cup} \cdots \dot{\cup} I_{\mathcal{F}}^{m+1} \subset E(M)\,.$$

 \begin{prop}\label{prop:eigenschaft1} Let $M$ be a loopfree matroid on the ground set $E(M)= \{ 0,1, \dotsc, n \}$ of rank $r(M) = r+1$ , and let $\sigma_\mathcal{F}$ and $\sigma_\mathcal{G}$ be two maximal cones in the fine subdivision $\Sigma(M) \subseteq \mathbb{R}^{n+1}/ \mathbb{R} \cdot \textbf{1}$ such that the linear spans $\langle \sigma_\mathcal{F} \rangle$ and $\langle \sigma_\mathcal{G} \rangle$ coincide. Then there exists a cone $\sigma$ in the Bergman fan $\mathfrak{B}(M)$ containing both of them, i.e.  $\sigma_\mathcal{F} \cup \sigma_\mathcal{G} \subseteq \sigma.$
\end{prop}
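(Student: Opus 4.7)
The plan is to show that the linear span $\langle\sigma_\mathcal{F}\rangle$ determines an unordered partition $\Pi_\mathcal{F}$ of $E(M)$; that maximality of $\sigma_\mathcal{F}$ forces the face of the matroid polytope $P_M$ cut out by the relative interior of $\sigma_\mathcal{F}$ to depend only on $\Pi_\mathcal{F}$; and hence that the Bergman cone dual to this common face contains both $\sigma_\mathcal{F}$ and $\sigma_\mathcal{G}$.

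For the first step, the telescoping identity $v_{F_j}=e_{I_\mathcal{F}^1}+\cdots+e_{I_\mathcal{F}^j}$ shows that the preimage of $\langle\sigma_\mathcal{F}\rangle$ in $\mathbb{R}^{n+1}$ is the subspace spanned by the mutually orthogonal indicators of the nonempty parts of
\[
\Pi_\mathcal{F}\;=\;\bigl\{\,I_\mathcal{F}^1,\,\ldots,\,I_\mathcal{F}^{d+1},\,E(M)\setminus F_{d+1}\,\bigr\}.
\]
These parts are intrinsic to the subspace, being the maximal subsets of coordinates on which every vector of $\langle\sigma_\mathcal{F}\rangle$ takes a common value. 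Hence $\langle\sigma_\mathcal{F}\rangle=\langle\sigma_\mathcal{G}\rangle$ forces the equality $\Pi_\mathcal{F}=\Pi_\mathcal{G}$ as unordered partitions.

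For the second step, since $\Sigma(M)$ is pure of dimension $r(M)-1$ and $\mathcal{L}(M)$ is a geometric (and therefore ranked) lattice, maximality of $\sigma_\mathcal{F}$ forces every rank drop $r(F_j)-r(F_{j-1})$ to equal $1$, so $r(F_j)=j$ for each $j$. For $w$ in the relative interior of $\sigma_\mathcal{F}$, a lift to $\mathbb{R}^{n+1}$ is constant on each part of $\Pi_\mathcal{F}$ with strictly decreasing values across $I_\mathcal{F}^1,\ldots,I_\mathcal{F}^{d+1}$, and the greedy characterization of matroid bases identifies the bases $B$ of $M$ maximizing $w\cdot e_B$ on $P_M$ as precisely those with $|B\cap F_j|=j$ for all $j$; by telescoping, this is the condition $|B\cap P|=1$ for every nonempty part $P$ of $\Pi_\mathcal{F}$. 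The face $S_\mathcal{F}$ of $P_M$ so described depends only on the unordered partition, whence $S_\mathcal{F}=S_\mathcal{G}$ by the first step.

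Setting $\sigma:=\sigma_{S_\mathcal{F}}=\sigma_{S_\mathcal{G}}$, we have $\sigma_\mathcal{F},\sigma_\mathcal{G}\subseteq\sigma$ because for $w$ in the relative interior of either fine-subdivision cone the face of $P_M$ maximizing $w$ is $S_\mathcal{F}=S_\mathcal{G}$. To verify $\sigma\in\mathfrak{B}(M)$, note that the degeneration matroid decomposes as
\[
M_{S_\mathcal{F}}\;=\;M|_{F_1}\,\oplus\,M[F_1,F_2]\,\oplus\,\cdots\,\oplus\,M[F_d,F_{d+1}]\,\oplus\,M/F_{d+1}
\]
(the last summand absent when $F_{d+1}=E(M)$), and each summand is loopfree, since no element of $F_j\setminus F_{j-1}$ lies in the flat $F_{j-1}$ and no element of $E(M)\setminus F_{d+1}$ lies in the flat $F_{d+1}$. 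The decisive point, and the main obstacle in the argument, is the rank-drop identity of the second step: maximality is what forces all rank drops to equal $1$, making the inequalities defining $S_\mathcal{F}$ symmetric in the parts of $\Pi_\mathcal{F}$ and hence invariant under reordering.
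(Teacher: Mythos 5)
Your proof is correct and follows essentially the same route as the paper's: recover the unordered partition $\{I_{\mathcal{F}}^j\}$ from the linear span, show that points in the relative interior of a maximal fine cone achieve their maximum on the face of $P_M$ spanned by the transversal bases, and identify the normal cone of that face as a loopfree (hence Bergman) cone containing both $\sigma_{\mathcal{F}}$ and $\sigma_{\mathcal{G}}$. The differences are only in the local justifications — you invoke the greedy characterization of maximum-weight bases where the paper carries out the exchange computation by hand, and you verify loopfreeness of the degeneration matroid via the direct-sum decomposition where the paper simply observes that the parts cover $E(M)$.
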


\begin{proof}  
Assume $\langle \sigma_\mathcal{F} \rangle=\langle\sigma_\mathcal{G} \rangle$ for two maximal cones  $\sigma_\mathcal{F}$ and $\sigma_\mathcal{G}$  in $\Sigma(M)$ given by the chains 
	\[\mathcal{F}: \emptyset \subsetneq F_{1} \subsetneq \dotsb \subsetneq F_{r} \subsetneq F_{r+1}=E(M)\] and

	\[\mathcal{G}: \emptyset \subsetneq G_{1} \subsetneq \dotsb \subsetneq G_{r} \subsetneq G_{r+1}=E(M),\]
respectively. Let us first show that this implies that $\{I_{\mathcal{F}}^j: j = 1, \ldots, r+1\}=\{I_{\mathcal{G}}^j: j = 1, \ldots, r+1\}.$

Recall that the cone $\sigma_{\mathcal{F}}$ associated to a chain $\mathcal{F}$ is defined as $\sigma_{\mathcal{F}} = \cone(v_F: F \in \mathcal{F}) + \mathbb{R} \cdot \textbf{1}.$ Since every maximal chain of flats contains $E(M)= \{0,1, \ldots,n\}$ as the maximal element, we find that
$\langle \sigma_{\mathcal{F}} \rangle = \langle \sigma_{\mathcal{G}} \rangle$ if and only if $\langle \cone(v_F: F \in \mathcal{F}) \rangle = \langle \cone(v_G: G \in \mathcal{G}) \rangle.$

Now, the set of vectors $(x_0, \ldots, x_n)^t \in \langle \cone(v_F: F \in \mathcal{F}) \rangle$ such that $x_l \in \{0,1\}$ for all $l \in \{0, \ldots, n\}$ are sums of incidence vectors 
	\[v_{I_{\mathcal{F}}^j}= \sum_{i \in I_{\mathcal{F}}^j} e_i .\] 

Hence every $v_{I_{\mathcal{F}}^{k}}$ is a sum of suitable $v_{I_{\mathcal{G}}^{j}}$. This sum cannot involve more than one summand since all $v_{I_{\mathcal{G}}^{j}}$ are zero-one-vectors contained in $\langle \cone(v_F: F \in \mathcal{F}) \rangle$ by assumption and $I_{\mathcal{F}}^k \subset I_{\mathcal{F}}^m$ implies $I_{\mathcal{F}}^k = I_{\mathcal{F}}^m.$ Hence we find $\{v_{I_{\mathcal{F}}}^j: j \} = 	 \{v_{I_{\mathcal{G}}}^j: j \}$ which implies our claim.

To each cone $\sigma_{\mathcal{F}}$ we can now associate the set $\mathcal{B}(\sigma_{\mathcal{F}})$ of all subsets of $E(M)$ of the form $\{i_1, \ldots, i_{r+1} : i_j \in I_{\mathcal{F}}^j\}.$
 It is straightforward to show that $\mathcal{B}(\sigma_{\mathcal{F}})$ is in fact a subset of the set of bases $\mathcal{B}(M)$ of $M.$

\medskip

Our next goal is to show that points in the relative interior of $\sigma_{\mathcal{F}}$ achieve their maximum value on the face $S=\conv(e_B: B \in \mathcal{B}(\sigma_{\mathcal{F}}))$ of $P_M.$
For a point $p=(p_0, \ldots, p_n)^t \in \mathbb{R}^{n+1}$ whose image in $\mathbb{R}^{n+1}/\mathbb{R} \cdot \textbf{1}$ lies in the relative interior of $\sigma_{\mathcal{F}}$ the entries $p_i$ and $p_j$ coincide if and only if there exists some $k \in \{ 1, \ldots, r+1\}$ such that $i,j \in I_{\mathcal{F}}^k.$ In particular the point $p$ has $r+1$ distinct entries $x_1, \ldots, x_{r+1}$ satisfying $x_i > x_{i+1},$ and therefore $\langle p,e_{\widetilde{B}} \rangle = \sum_{i=1}^{r+1} x_i$ for all $\widetilde{B} \in \mathcal{B}(\sigma_{\mathcal{F}}).$ Now let $B \in \mathcal{B}(M)$ be an arbitrary basis of the matroid $M$ and $\lambda_j = \vert B \cap I_{\mathcal{F}}^j \vert $ for $j \in \{1, \ldots, r+1\}.$ The sum $\sum_{j=1}^{r+1} \lambda_j$ is equal to $r+1$ since $\vert B \vert = r+1.$ Moreover for each $F_k$ in the maximal chain $\mathcal{F}$ we have that $\vert B \cap F_k \vert \leq k,$ hence $\sum_{j=1}^{k} \lambda_j \leq k$ for all $k \in \{1, \ldots, r+1\}.$ Then $\langle p, e_B \rangle$ has the form
$$\langle p, e_B \rangle = \lambda_1 x_1 + \lambda_2 x_2 + \cdots + \lambda_r x_r. $$ If $\lambda_j=1$ for all $j \in \{1, \ldots, r+1 \}$ then $B$ is an element of $\mathcal{B}(\sigma_{\mathcal{F}})$ and there is nothing to show. Therefore assume that there is a smallest index $i_1$ such that $\lambda_{i_1} > 1.$ Since $\sum_{j=1}^{i_1} \lambda_j \leq i_1$ we can accordingly find an index $j_1<i_1$ such that $\lambda_{j_1} = 0.$ Note that, in particular, the coordinate $x_{j_1} > x_{i_1}.$ We conclude that 
\begin{align*}
\langle p, e_B \rangle &= \lambda_1x_1 + \cdots + \lambda_{j_1}x_{j_1} + \cdots + \lambda_{i_1}x_{i_1} + \cdots + \lambda_{r+1}x_{r+1} \\
&<  \lambda_1x_1 + \cdots + (\lambda_{j_1}+1)x_{j_1} + \cdots + (\lambda_{i_1}-1)x_{i_1} + \cdots + \lambda_{r+1}x_{r+1}
\end{align*} 
Note that the new coefficients in this combination still sum up to $r+1$. After repeating the previous argument finitely many times, we reach the situation where all coefficients are smaller or equal to $1$, and hence equal to $1$. Therefore
	\[\langle p, e_B \rangle < x_1 + \cdots + x_{r+1} = \langle p, e_{\widetilde{B}} \rangle\,.\] In particular, the point $p$ achieves its maximal value indeed on the face $S=\conv(e_B: B \in \mathcal{B}(\sigma_{\mathcal{F}}))$ of $P_M.$
	
\medskip
	
We can now prove the theorem. 
Let $\sigma_{\mathcal{F}}$ and $\sigma_{\mathcal{G}}$ be maximal cones in $\Sigma(M)$ associated to the maximal chains of flats $\mathcal{F}$ and $\mathcal{G}$ such that $\langle \sigma_{\mathcal{F}} \rangle = \langle \sigma_{\mathcal{G}} \rangle.$ Then we have seen that the sets $\{I_{\mathcal{F}}^j: j = 1, \ldots, r+1\}$ and $\{I_{\mathcal{G}}^j: j = 1, \ldots, r+1\}$ in the decompositions of the chains $\mathcal{F}$ and $\mathcal{G}$ coincide. In particular the set of bases $\mathcal{B}(\sigma_{\mathcal{F}})$ and $\mathcal{B}(\sigma_{\mathcal{G}})$ are equal. Therefore the points in the relative interiors of $\sigma_{\mathcal{F}}$ and $\sigma_{\mathcal{G}}$ take their maximum value on the same face $S$ of the matroid polytope $P_M$ which is spanned by the incidence vectors $e_B,$ where $B$ runs over the bases in $\mathcal{B}(\sigma_{\mathcal{F}}) = \mathcal{B}(\sigma_{\mathcal{G}}).$ Hence $\sigma_{\mathcal{F}} \cup \sigma_{\mathcal{G}} \subset \sigma_S,$ where $\sigma_S$ denotes the closure of the equivalence class of points achieving their maximum value on $S.$ Recall that the Bergman fan $\Bfrak(M)$ consists of those cones $\sigma_S$ such that the degeneration matroid $M_S$ is loopfree. Since $\bigcup_{j=1}^{r+1} I_{\mathcal{F}}^j = E(M),$ the cone $\sigma_S$ is in fact a cone of the Bergman fan.
\end{proof}

In \cite{rtw} it is shown for the full rational arrangement $\cA$  in projective space over a finite field that every automorphism of $\Omega_\cA$ (which is Drinfeld's half-space over the finite field) extends to an automorphism of its wonderful compactification $X_{wnd}(\cA).$ One key point  in this argument is \cite[Lemma 2.2]{rtw} which states that the image of the skeleton of the analytification of $\Omega_\cA$ under a natural toroidal embedding has the property that distinct cones span distinct linear spaces. We will now show an analogous property for the Bergman fan of loopfree matroids. 

\begin{thm}\label{thm:cones} Let $M$ be a loopfree matroid and $\mathfrak{B}(M)$ its Bergman fan. Then distinct maximal cones of $\mathfrak{B}(M)$ span distinct linear spaces.
\end{thm}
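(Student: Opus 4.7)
The plan is to reduce the theorem to Proposition 4.1 by exploiting the fact that the fine subdivision $\Sigma(M)$ refines the Bergman fan $\mathfrak{B}(M)$, both supported on $\trop(M)$, and both pure of dimension $r(M)-1$ in $\mathbb{R}^{n+1}/\mathbb{R}\cdot \textbf{1}$ (purity of $\Sigma(M)$ is [Proposition 2, fy], from which purity of $\mathfrak{B}(M)$ follows since it is a coarsening with the same support).

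I would argue by contradiction. Suppose $\sigma_1$ and $\sigma_2$ are two distinct maximal cones of $\mathfrak{B}(M)$ with $\langle \sigma_1\rangle = \langle \sigma_2\rangle$. Since $\Sigma(M)$ refines $\mathfrak{B}(M)$, I can pick maximal cones $\sigma_{\mathcal{F}}, \sigma_{\mathcal{G}} \in \Sigma(M)$ with $\sigma_{\mathcal{F}} \subseteq \sigma_1$ and $\sigma_{\mathcal{G}} \subseteq \sigma_2$. As both fans are pure of dimension $r(M)-1$, the inclusions $\sigma_{\mathcal{F}} \subseteq \sigma_1$ and $\sigma_{\mathcal{G}} \subseteq \sigma_2$ are inclusions of cones of the same dimension, so $\langle \sigma_{\mathcal{F}}\rangle = \langle \sigma_1\rangle = \langle \sigma_2\rangle = \langle \sigma_{\mathcal{G}}\rangle$.

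Proposition 4.1 then gives a cone $\sigma \in \mathfrak{B}(M)$ containing $\sigma_{\mathcal{F}} \cup \sigma_{\mathcal{G}}$. Since $\sigma$ already contains a cone of top dimension, $\sigma$ itself is a maximal cone of $\mathfrak{B}(M)$. Now $\sigma_1$ and $\sigma$ are two maximal cones of the fan $\mathfrak{B}(M)$ that both contain the top-dimensional subset $\sigma_{\mathcal{F}}$; the intersection $\sigma_1 \cap \sigma$ is a common face of each of dimension $\geq r(M)-1$, hence equals each of them. Thus $\sigma = \sigma_1$, and symmetrically $\sigma = \sigma_2$, contradicting $\sigma_1 \neq \sigma_2$.

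The real work has been done in Proposition 4.1; what is left is purely a fan-theoretic matching of maximal cones of $\Sigma(M)$ with those of $\mathfrak{B}(M)$. The only point that has to be checked carefully is that the refinement $\Sigma(M) \to \mathfrak{B}(M)$ sends maximal cones to maximal cones, which is ensured by the equality of top dimensions recalled above. No further matroid-theoretic input is needed.
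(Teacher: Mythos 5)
Your proposal is correct and follows essentially the same route as the paper: pass to maximal cones of the fine subdivision contained in the two Bergman cones, observe their linear spans coincide, invoke Proposition~\ref{prop:eigenschaft1} to find a single Bergman cone containing both, and conclude because two cones of a fan meeting in full dimension must be equal. The only difference is that you spell out the purity and common-face justifications slightly more explicitly than the paper does.
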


\begin{proof}
Suppose there are distinct maximal cones $\sigma_1$ and $\sigma_2$ in the Bergman fan $\mathfrak{B}(M)$ such that $\langle \sigma_1 \rangle = \langle \sigma_2 \rangle$. Since the fine subdivision refines the Bergman fan, there are maximal cones $a_1$ and $a_2$ in $\Sigma(M)$ such that $a_1 \subseteq \sigma_1$ and $a_2 \subseteq \sigma_2.$ In particular $\dim a_1 = \dim \sigma_1$ and $\dim a_2 = \dim \sigma_2$ and hence $\langle a_1 \rangle = \langle \sigma_1 \rangle$ and $\langle a_2 \rangle = \langle \sigma_2 \rangle.$ Since the linear hulls of $\sigma_1$ and $\sigma_2$ coincide, so do the linear hulls of $a_1$ and $a_2.$ Therefore the cones $a_1$ and $a_2$ are maximal cones in the fine subdivision $\Sigma(M)$ whose linear hulls coincide.  By \autoref{prop:eigenschaft1} there exists a cone $\sigma$ in the Bergman fan $\mathfrak{B}(M)$ which contains the union $a_1 \cup a_2.$ In particular $a_1 \subseteq \sigma_1 \cap \sigma$ and $a_2 \subseteq \sigma_2 \cap \sigma.$ That means $\sigma$ and $\sigma_i$ are cones in the Bergman fan which intersect in full dimension. Hence $\sigma_1 = \sigma = \sigma_2.$ 
\end{proof}

\section{Extending morphisms between arrangement complements}\label{sec:extend}

 Let 
$\cA$ be a finite arrangement of hyperplanes in projective space $\mathbb{P}(V)$, where $V$ is a vector space of dimension $d+1$ over an arbitrary ground field $K$.  We will now show our main result on extension of endomorphisms of $\Omega_\cA$, using the notation from \autoref{sec:background}.

\begin{thm}\label{mainthm}
(i) If the hyperplane arrangement $\cA$ is essential and connected, then every dominant morphism  $f: \Omega_\cA \rightarrow \Omega_\cA$ can be extended to a morphism on the visible contour compactification $\overline{f}: X_{vc}(\cA) \rightarrow X_{vc}(\cA)$. 

(ii) If $\cA$ is essential and connected, then every automorphism of $\Omega_\cA$ extends to an automorphism of its visible contour compactification $X_{vc}(\cA).$
\end{thm}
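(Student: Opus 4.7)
\emph{Proof plan.} The plan is to first extend $f$ to a morphism on the ambient intrinsic torus $T$, then use the Bergman fan structure (together with \autoref{thm:cones}) to extend to the toric variety $Y_{\Bfrak(M_\cA)}$, and finally restrict to $X_{vc}(\cA) = \overline{\Omega}_\cA$. Since $\mathcal{O}(\Omega_\cA)^*/K^*$ is the character lattice $M$ of $T$, the ring homomorphism $f^*$ descends to a group endomorphism $\overline{f}^*\colon M \to M$. Recording the resulting scalar factors in $K^*$ by a unique $t \in T(K) = \Hom(M,K^*)$, one obtains a morphism $\tilde f = \tau_t \circ \phi\colon T \to T$ satisfying $\tilde f \circ j = j \circ f$, where $\phi\colon T \to T$ is the group homomorphism corresponding to $\overline{f}^*$ on character lattices. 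Since $f$ is dominant, $f^*$ is injective, so $\overline{f}^*$ is injective and $\phi$ is an isogeny. As $K$ carries the trivial valuation, $\tau_t$ tropicalizes trivially, and the tropicalization of $\tilde f$ is the linear isomorphism $\Phi\colon N_\R \to N_\R$ induced by $\phi$ on cocharacters; by functoriality of tropicalization, $\Phi\bigl(\trop(\Omega_\cA)\bigr) \subseteq \trop(\Omega_\cA) = |\Bfrak(M_\cA)|$.

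The decisive step is to verify that $\Phi$ is a morphism of fans $\Bfrak(M_\cA) \to \Bfrak(M_\cA)$, that is, that $\Phi(\sigma)$ is contained in a single cone for each maximal cone $\sigma$ of $\Bfrak(M_\cA)$. Set $d = \dim \trop(\Omega_\cA)$ and $L = \langle \Phi(\sigma)\rangle$. Since $\Phi$ is a linear isomorphism, $\Phi(\sigma)$ is a $d$-dimensional convex cone inside the pure $d$-dimensional support $|\Bfrak(M_\cA)|$, so the finite covering $\Phi(\sigma) = \bigcup_\tau \bigl(\Phi(\sigma) \cap \tau\bigr)$ by maximal cones $\tau$ of $\Bfrak(M_\cA)$ forces some $\tau$ to meet $\Phi(\sigma)$ in dimension $d$, which then satisfies $\langle \tau\rangle = L$. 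At this point \autoref{thm:cones} delivers the crucial uniqueness: $\tau$ is the \emph{only} maximal cone of $\Bfrak(M_\cA)$ whose linear span is $L$, so $L \cap |\Bfrak(M_\cA)|$ decomposes as $\tau$ together with pieces $L \cap \tau'$ of dimension strictly less than $d$ coming from the remaining maximal cones. Any point in the relative interior of $\Phi(\sigma)$ (taken inside $L$) admits an $L$-open neighborhood contained in $\Phi(\sigma)$, which cannot lie in a union of strictly lower-dimensional pieces; hence this interior is contained in $\tau$ and, by closedness of $\tau$ and $\Phi(\sigma)$, $\Phi(\sigma) \subseteq \tau$.

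With the fan-morphism property secured, $\tilde f$ extends to a toric morphism of $Y_{\Bfrak(M_\cA)}$; restricting to the scheme-theoretic closure $X_{vc}(\cA) = \overline{\Omega}_\cA$ and using continuity together with $f(\Omega_\cA) \subseteq \Omega_\cA$ produces the desired endomorphism $\overline{f}\colon X_{vc}(\cA) \to X_{vc}(\cA)$, proving (i). For (ii), one applies (i) to both $f$ and $f^{-1}$; the two extensions $\overline{f}$ and $\overline{f^{-1}}$ restrict to mutually inverse isomorphisms on the dense open $\Omega_\cA \subseteq X_{vc}(\cA)$, so by separatedness their compositions equal the identity on all of $X_{vc}(\cA)$, making $\overline{f}$ an automorphism. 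The main obstacle I anticipate lies in the middle paragraph: the convexity argument must invoke \autoref{thm:cones} in exactly the right way to rule out the possibility of $\Phi(\sigma)$ spilling across two distinct maximal cones of $\Bfrak(M_\cA)$ meeting along a codimension-one wall.
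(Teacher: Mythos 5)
Your proposal is correct and follows essentially the same route as the paper: factor the induced map on the intrinsic torus as a translation composed with a monomial map, show the tropicalized linear map is invertible and carries each maximal Bergman cone into a single cone by combining a dimension count with \autoref{thm:cones}, then extend torically and restrict to the closure. The only cosmetic differences are that you deduce invertibility of $\Phi$ from injectivity of the character-lattice endomorphism rather than from surjectivity of the tropicalized map (the paper cites \cite[Proposition 3.1]{Tev}), and your handling of the ``decisive step'' spells out in more detail the same argument the paper gives by contradiction.
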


\begin{proof}
{\bf(i)} Let $f: \Omega_\cA \to \Omega_\cA$ be a dominant endomorphism and denote by $j: \Omega_\cA \hookrightarrow T$ the natural embedding of $\Omega_\cA$ into its intrinsic torus as in section \ref{sec:background}. Let $l_0, \ldots, l_n$ be the linear forms associated to the hyperplanes in $\cA$, and let $x_1, \ldots, x_n$ be the basis of the character group $N^\ast$ of $T$ such that $j$ is given by $x_i \mapsto l_i/l_0$. 

Since $f$ is dominant, the associated map on coordinate rings $f^\sharp: \mathcal{O}(\Omega_\cA) \to \mathcal{O}(\Omega_\cA)$ is injective. 
As the  multiplicative group $\mathcal{O}(\Omega_\cA)^*/K^*$ is generated by the classes $\left[\frac{l_i}{l_0}\right]$ of $\frac{l_i}{l_0}$ for $i$ in $\{ 1, \ldots, n\}$ we find that $ f^\sharp \left(\frac{l_i}{l_0} \right)  = \lambda_i \prod_{j=1}^{n} \left(\frac{l_i}{l_0}\right)^{a_{ij}}$ for integers $a_{ij}$ and $\lambda_i \in K^\ast$. In terms of the coordinates $x_1,\ldots, x_n$,  the matrix $A = (a_{ij})$ defines an endomorphism of $N^\ast$. Hence it gives rise to a torus homomorphism $h: T \rightarrow T$.  Let $\lambda$ be the $K$-rational point of $T$ with coordinates $(\lambda_1, \ldots, \lambda_n)$, and write $t_\lambda: T \rightarrow T$ for translation by $\lambda$. 

Putting $g = t_\lambda \circ h$, we have a commutative diagram
\[
\xymatrix{\Omega_\cA  \ar[d]^{f}\ \ar[r] & T \ar[d]^g\\
\Omega_\cA \ar[r] &  T  \ }\]

We claim that $g$ extends to a morphism of the toric variety $Y_{\Bfrak(M_\cA)}.$ In fact, since the natural action of the intrinsic torus on itself extends to $Y_{\Bfrak(M_\cA)},$ so does the automorphism $t_\lambda$. Therefore we only need  show that the group homomorphism $h$ extends as well. In order to prove this, we will show that the linear endomorphism $a$ induced by $A^t$ on the cocharacter space $N_\mathbb{R}$ is compatible with the fan $\Bfrak(M_\cA)$ and hence gives rise to a toric morphism $\overline{g}: Y_{\Bfrak(M_\cA)} \to Y_{\Bfrak(M_\cA)}$ extending $g$. \\

The linear map $a: N_{\mathbb{R}} \rightarrow N_{\mathbb{R}} $ maps $\trop(\Omega_\cA)$ to $\trop(\Omega_\cA)$. By \cite[Proposition 3.1]{Tev}, this is a surjection $a: \trop(\Omega_\cA) \to \trop(\Omega_\cA).$ In order to show that $a$ is compatible with the fan structure on $\Bfrak(M_\cA),$ we first note that all vectors of the standard basis $e_1, \ldots, e_n$ of $N_{\mathbb{R}}$ lie in $\trop(\Omega_\cA).$ Since $a$ is surjective on $\trop(\Omega_\cA)$ the linear map $a$ is an element of $\GL_n(\mathbb{R})$. In fact, it lies in $\GL_n(\mathbb{Z})$ if and only if the dominant map $f$ is an automorphism of $\Omega_\cA$, since in the latter case we can apply the previous reasoning to $f^{-1}$. 

For a maximal cone $\sigma$ in $\Bfrak(M_\cA)$ of dimension $\dim(\sigma) = \dim(\Omega_\cA) = d,$ the image $a(\sigma)$ is again a $d-$dimensional cone in $N_{\mathbb{R}}.$ We need to show that $a(\sigma)$ is contained in a cone of $\Bfrak(M_\cA).$ Suppose $a(\sigma)$ intersects two different maximal cones $\sigma_1$ and $\sigma_2$ in $\Bfrak(M_\cA)$ in their relative interiors. Since $d = \dim(a(\sigma)) = \dim(\sigma_1) = \dim(\sigma_2)$ it follows that the linear hulls spanned by $\sigma_1$ and $\sigma_2$ coincide. This is a contradiction to \autoref{thm:cones}. In particular $a$ is a homomorphism of fans and hence $h$ extends to a morphism $\overline{h}: Y_{\Bfrak(M_\cA)} \to Y_{\Bfrak(M_\cA)}.$ Therefore also $g$ extends to a morphism $\overline{g}: Y_{\Bfrak(M_\cA)} \to Y_{\Bfrak(M_\cA)}.$ By restricting $\overline{g}$ to the closure of $\Omega_\cA$ in the toric variety $Y_{\Bfrak(M_\cA)}$ we get the required extension.

{\bf(ii)} follows by applying (i) to $f$ and to its inverse morphism. Note that since $X_{vc}(\cA)$ is reduced and separated, endomorphisms of $\Omega_\cA$ extend uniquely. \end{proof}

Note that in the proof of the previous theorem we could also have argued with the fact that the Bergman fan is the coarsest fan structure on $\trop(\Omega_\cA)$.  The argument given here is more intrinsic, and we hope that \autoref{thm:cones} is also useful for other purposes.

\begin{corollary}\label{finite} If the hyperplane arrangement $\cA$ is essential and connected, every dominant morphism $f: \Omega_\cA \rightarrow \Omega_\cA$ is finite.
\end{corollary}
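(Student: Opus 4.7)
The plan is to deduce finiteness from the extension result \autoref{mainthm} by showing that the extended morphism preserves the boundary stratification of $X_{vc}(\cA)$, so that $f$ itself is proper; combined with the affineness of $\Omega_\cA$, this will force $f$ to be finite.

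First I apply \autoref{mainthm}(i) to extend $f$ to a morphism $\overline{f}: X_{vc}(\cA) \to X_{vc}(\cA)$, which is proper because $X_{vc}(\cA)$ is proper over $K$. As recalled in the proof of \autoref{mainthm}, $\overline{f}$ arises as the restriction to $X_{vc}(\cA)$ of a toric morphism $\overline{g}: Y_{\Bfrak(M_\cA)} \to Y_{\Bfrak(M_\cA)}$ extending $g = t_\lambda \circ h$, and the underlying cocharacter map $a: N_{\mathbb{R}} \to N_{\mathbb{R}}$ lies in $\GL_n(\mathbb{R})$.

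The central step is to verify that $\overline{f}^{-1}(\Omega_\cA) = \Omega_\cA$. The inclusion $\Omega_\cA \subseteq \overline{f}^{-1}(\Omega_\cA)$ is immediate from $f(\Omega_\cA) \subseteq \Omega_\cA$. For the converse I invoke the orbit--cone correspondence for toric morphisms: the orbit $O_\sigma$ in $Y_{\Bfrak(M_\cA)}$ is sent by $\overline{g}$ into the orbit $O_{\sigma'}$, where $\sigma'$ is the minimal cone of $\Bfrak(M_\cA)$ containing $a(\sigma)$, using also that the translation $t_\lambda$ preserves every torus orbit. Since $a$ is invertible on $N_{\mathbb{R}}$, we have $a(\sigma) \neq \{0\}$ whenever $\sigma \neq \{0\}$, hence $\sigma' \neq \{0\}$ as well. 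Therefore $\overline{g}$ sends $Y_{\Bfrak(M_\cA)} \setminus T$ into itself; intersecting with $X_{vc}(\cA)$ and using $X_{vc}(\cA) \cap T = \Omega_\cA$, I conclude that $\overline{f}$ maps $X_{vc}(\cA) \setminus \Omega_\cA$ into itself, which yields the desired equality.

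With boundary preservation in hand, the rest is formal. The morphism $f$ is the base change of the proper morphism $\overline{f}$ along the open immersion $\Omega_\cA \hookrightarrow X_{vc}(\cA)$, hence $f$ is proper. Since $j: \Omega_\cA \hookrightarrow T$ is a closed immersion into an affine torus, $\Omega_\cA$ is affine, so $f$ is a proper morphism between affine schemes and therefore finite. I expect the main obstacle to be the boundary-preservation step; it relies crucially on the invertibility of $a$ on $N_{\mathbb{R}}$, which is the precise manifestation of dominance already extracted in the proof of \autoref{mainthm}.
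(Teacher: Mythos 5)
Your proof is correct and follows essentially the same route as the paper: extend $f$ to $\overline{f}$ via \autoref{mainthm}, observe that the square is Cartesian, deduce properness of $f$ by base change, and conclude finiteness since a proper morphism of affine varieties is finite. In fact you supply more detail than the paper does at the one nontrivial point, namely the verification via the orbit--cone correspondence and the invertibility of $a$ that $\overline{f}^{-1}(\Omega_\cA) = \Omega_\cA$, which the paper asserts without comment.
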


\begin{proof}
Let $f: \Omega_\cA \to \Omega_\cA$ be a dominant endomorphism. Then by  \autoref{mainthm}, $f$ extends to a morphism $\overline{f}: X_{vc}(\cA) \to X_{vc}(\cA)$ and this yields the following Cartesian diagram:
\[
\xymatrix{\Omega_\cA  \ar[d]^{f}\ \ar[r]  &X_{vc}(\cA)  \ar[d]^{\overline{f}}\\
\Omega_\cA \ar[r] &  X_{vc}(\cA) }\]

Since $\overline{f}$ is proper, so is $f.$ As  a proper morphism of affine varieties $f$ is indeed finite.
\end{proof}

In view of the previous \autoref{thm:fs}  by Feichtner and Sturmfels, we also have the following corollary. 

\begin{corollary} \label{cor:wond}Assume that $\cA$ is essential and connected, and that $M[F,G]$ is connected for every pair of flats $F$ and $G$ in $\mathcal{L}(M_\cA)$ with $G$ connected and $F \subset G.$ Then every dominant morphism $f:  \Omega_\cA \rightarrow \Omega_\cA$ extends to a morphism  on the wonderful compactification $\ov{f}: X_{wnd}(\cA) \rightarrow X_{wnd}(\cA)$. \end{corollary}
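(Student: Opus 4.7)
The plan is to derive this as an immediate consequence of \autoref{mainthm} combined with \autoref{thm:fs}. The hypothesis that $M[F,G]$ is connected for every pair of flats $F\subset G$ with $G$ connected is precisely the combinatorial criterion of Feichtner--Sturmfels ensuring that the two fan structures on $\trop(\Omega_\cA)$ relevant here coincide.

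More concretely, I would first observe that $M_\cA$ is loopfree (since $\cA$ is essential, no $l_i$ is the zero form, so no element of the ground set is a loop). Thus \autoref{thm:fs} applies and yields the equality of fans
\[\Bfrak(M_\cA)=\Sigma_{min}(M_\cA).\]
In particular the associated toric varieties agree, $Y_{\Bfrak(M_\cA)}=Y_{\Sigma_{min}(M_\cA)}$, and taking the closure of $\Omega_\cA$ in either yields the same tropical compactification. By the definitions in \autoref{sec:background}, this is exactly the statement that
\[X_{vc}(\cA)=X_{wnd}(\cA).\]

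Given a dominant morphism $f:\Omega_\cA\to\Omega_\cA$, \autoref{mainthm}(i) produces an extension $\overline{f}:X_{vc}(\cA)\to X_{vc}(\cA)$. Under the identification $X_{vc}(\cA)=X_{wnd}(\cA)$, this $\overline{f}$ is the required extension to the wonderful compactification. There is no genuine obstacle here: the statement is a formal combination of \autoref{thm:fs} and \autoref{mainthm}, the only point requiring a moment's thought being the verification of loopfreeness of $M_\cA$ to legitimately invoke \autoref{thm:fs}.
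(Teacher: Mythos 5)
Your argument is correct and coincides with the paper's (implicit) proof: the hypothesis is exactly the Feichtner--Sturmfels criterion of \autoref{thm:fs} for $\Bfrak(M_\cA)=\Sigma_{min}(M_\cA)$, hence $X_{vc}(\cA)=X_{wnd}(\cA)$, and \autoref{mainthm}(i) then gives the extension. The loopfreeness check is a sensible added detail and indeed holds since an essential arrangement consists of genuine hyperplanes, so no $l_i$ vanishes.
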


An important example where the conditions of this corollary are fulfilled is the following one. 

\begin{corollary} \label{cor:drinfeld} Assume that $K = \mathbb{F}_q$ is a finite field and $\cA$ is the  arrangement consisting of all $\mathbb{F}_q$-rational hyperplanes in $\mathbb{P}^n_K$, so that $\Omega_\cA$ is Drinfeld's  half-space over $\mathbb{F}_q$. 

(i) Every dominant morphism $f: \Omega_\cA \rightarrow \Omega_\cA$ extends to a morphism $\ov{f}: X_{wnd}(\cA) \rightarrow X_{wnd}(\cA) $ on the wonderful compactification. In particular, $f$ is a finite morphism.

(ii) Every automorphism $f: \Omega_\cA \to \Omega_\cA$ extends to an automorphism $\overline{f}: X_{wnd}(\cA) \to X_{wnd}(\cA).$
\end{corollary}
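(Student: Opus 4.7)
The approach is to reduce the statement to Corollary \ref{cor:wond} by verifying its combinatorial hypotheses for the specific matroid $M_\cA$ attached to the full $\mathbb{F}_q$-rational arrangement; part (ii) will then follow formally from part (i) together with a density argument.

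First I would identify $M_\cA$ explicitly: its ground set is the set of $\mathbb{F}_q$-rational hyperplanes in $\mathbb{P}^n_{\mathbb{F}_q}$, equivalently the set of $\mathbb{F}_q$-rational points of the dual projective space, with independence given by linear independence of the defining forms $l_i \in V^\ast$. Thus $M_\cA$ is the projective geometry matroid $PG(n,q)$. Essentiality is immediate since no nonzero vector of $V = \mathbb{F}_q^{n+1}$ lies on every $\mathbb{F}_q$-rational hyperplane. Connectedness of $PG(n,q)$ for $n \geq 1$ is standard: any two distinct elements span a projective line containing $q+1 \geq 3$ points, so they lie together in a circuit of size three.

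Next I would verify the Feichtner--Sturmfels condition from \autoref{thm:fs} that is used in Corollary \ref{cor:wond}: that $M_\cA[F,G]$ is connected for every pair of flats $F \subset G$ in $\mathcal{L}(M_\cA)$ with $G$ connected. Flats of $M_\cA$ correspond bijectively to $\mathbb{F}_q$-linear subspaces of $V^\ast$ via $F \mapsto L_F := \operatorname{span}_{\mathbb{F}_q}(l_i : i \in F)$, with inclusions of flats matching inclusions of subspaces. For $F \subset G$ corresponding to $L_F \subset L_G$, the contraction-restriction $M_\cA[F, G]$ is canonically isomorphic to the matroid of linear dependence in the quotient $L_G/L_F$ among the images of $\{l_i : i \in G \setminus F\}$. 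This is again a projective geometry matroid, namely $PG(\dim L_G - \dim L_F - 1, q)$, and hence connected (and nonempty whenever $F \subsetneq G$).

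Granted these two verifications, part (i) follows directly from Corollary \ref{cor:wond}, and finiteness of $f$ is the content of Corollary \ref{finite}. For part (ii), I would apply (i) to both $f$ and $f^{-1}$, obtaining extensions $\ov{f}$ and $\ov{f^{-1}}$ on $X_{wnd}(\cA)$. Their compositions in either order agree with the identity on the dense open $\Omega_\cA$; since $X_{wnd}(\cA)$ is separated, both compositions equal the identity on all of $X_{wnd}(\cA)$, so $\ov{f}$ is an automorphism. The only substantive step is the combinatorial identification of $M_\cA[F, G]$ with a projective geometry, which is a routine unwinding of the definitions of contraction and restriction rather than a genuine obstacle.
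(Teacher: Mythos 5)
Your proposal is correct and takes essentially the same route as the paper: both verify the Feichtner--Sturmfels condition of \autoref{thm:fs} by identifying $M_\cA[F,G]$ with the matroid of the full arrangement in $\mathbb{P}^{r(G)-r(F)-1}$ (i.e.\ a projective geometry over $\mathbb{F}_q$, hence connected), so that $X_{vc}(\cA)=X_{wnd}(\cA)$ and everything reduces to \autoref{mainthm} and \autoref{finite}. Your more explicit description via the quotient $L_G/L_F$ and the inverse-composition argument for (ii) are just spelled-out versions of what the paper does.
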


\begin{proof}

Let $M_\cA$ be the matroid associated to $\cA$ and $F$ and $G$ two flats of $M,$ such that $F \subset G.$ Then $M[F,G]$ is the matroid associated to the full arrangement in $\mathbb{P}^{r(G)-r(F)-1},$ where $r(F)$ and $r(G)$ denotes the rank of the flats $F$ and $G$, respectively. In particular, $M[F,G]$ is connected for all pairs of flats $F \subset G$ of $M_\cA.$ Therefore in the case of the Drinfeld's half-space the fans $\Bfrak(M_\cA)$ and $\Sigma_{min}(M_\cA)$ coincide by \autoref{thm:fs} and, in particular, so do the visible contour and the wonderful compactification. Hence our claims are a direct consequence of \autoref{mainthm} and \autoref{finite}.
\end{proof}

In particular, \autoref{cor:drinfeld} gives an alternative proof of \cite[Proposition 2.1]{rtw} without using analytic geometry. Moreover, it generalizes this result to a large class of arrangement complements.

\small 

\end{document}